\newtheorem{thm}{Theorem}[section]%
\newtheorem{lem}{Lemma}[section]
\newtheorem{ass}{Assumption}
\newcommand{\RR}{{\mathbb R}}
\newcommand{\Id}{{1\!\!1}}
\newcommand{\EE}[1]{{\text{\normalsize$\mathbb E$}}\left(#1\right)}
\newcommand{\CC}{{\mathbb C}}
\newcommand{\DD}{{\mathcal D}}
\newcommand{\HH}{{\mathcal H}}
\newcommand{\MM}{{\mathbb M}}
\newcommand{\UU}{{\mathbb U}}
\newcommand{\II}{{\Id}}
\newcommand{\bra}[1]{\left<#1\right|}
\newcommand{\ket}[1]{\left|#1\right>}
\newcommand{\tr}[1]{\text{Tr}\left(#1\right)}
\newcommand{\half}{\text{\scriptsize $\frac{1}{2}$}}
\newcommand{\nmax}{n^{\text{\tiny max}}}
\title{Design of Strict Control-Lyapunov Functions \\for Quantum Systems with QND Measurements
    \thanks{This work was   supported in part by the "Agence Nationale de la Recherche" (ANR), Projet Jeunes Chercheurs EPOQ2 number ANR-09-JCJC-0070 and Projet Blanc  CQUID number 06-3-13957.}}
\author{ H. Amini\thanks{Mines ParisTech,  Centre Automatique et Syst\`emes, Math\'{e}matiques et Syst\`{e}mes,
        60 Bd Saint Michel, 75272 Paris cedex 06, France,
        {\tt\small hadis.amini@mines-paristech.fr}}
        \and
        P.  Rouchon\thanks{Mines ParisTech,  Centre Automatique et Syst\`emes, Math\'{e}matiques et Syst\`{e}mes,
        60 Bd Saint Michel, 75272 Paris cedex 06, France,
        {\tt\small pierre.rouchon@mines-paristech.fr}}
        \and
        M. Mirrahimi \thanks{INRIA Rocquencourt,
        Domaine de Voluceau, B.P. 105, 78153 Le Chesnay cedex, France,
        {\tt\small mazyar.mirrahimi@inria.fr}}
}
\begin{document}

\maketitle \thispagestyle{empty} \pagestyle{empty}



\begin{abstract}
We consider  discrete-time quantum systems  subject to   Quantum Non-Demolition (QND) measurements and controlled by an adjustable unitary evolution between two successive QND measures. In open-loop, such QND measurements provide  a non-deterministic preparation tool exploiting the back-action of the measurement on the quantum state. We propose here a systematic method based on elementary graph theory and inversion of Laplacian matrices  to construct strict control-Lyapunov functions. This yields  an appropriate feedback law that stabilizes globally  the system towards a chosen target state among the open-loop stable ones, and that makes in closed-loop this preparation deterministic. We  illustrate such  feedback laws through simulations corresponding to an  experimental setup with QND photon counting.
\end{abstract}

\section{Introduction}

Feedback stabilization of quantum states is closely related to the concept of Quantum Non-Demolition (QND) measurement~\cite{braginsky-vorontosov:75,thorne-et-al:78,Unruh-78}. Indeed, as soon as we are interested in applying a measurement-based feedback to stabilize a quantum state, we need to make sure that the measurement itself is not changing the desired target state. This means that the measurement procedure is QND with respect to the projection over the target state. In fact, very often a well-chosen QND measurement protocol can itself be considered as a preparation tool for various quantum states. However, this preparation is generally non-deterministic and one can not make sure to converge towards the desired state except by repeating the experiment many times. The feedback can be applied here to make this process deterministic~\cite{stockton-et-al:PRA2004,mirrahimi-handel:siam07,dotsenko-et-al:PRA09}.

This paper is a generalization  of the feedback law,  proposed  in~\cite{dotsenko-et-al:PRA09,mirrahimi-et-al:cdc09} and experimentally tested in~\cite{Sayrin-et-al:nature2011},   to  generic discrete-time quantum systems  where,  between two successive QND measurements, a controlled unitary evolution can be  applied. The dynamics of such discrete-time  quantum systems are governed  by non linear Markov chains. In~\cite{dotsenko-et-al:PRA09,mirrahimi-et-al:cdc09} the feedback laws were obtained by maximizing the fidelity with respect to  the target state at each time-step: this means that the feedback strategy  was based on the same Lyapunov function given by the fidelity between the current and the target state.   We propose here a systematic and explicit method to design a new  family of  control-Lyapunov functions. The main interest of these new Lyapunov functions relies on the crucial fact that the increases of their expectation values at  step $k+1$ knowing the state  $\rho_k$ at step $k$   remain  strictly positive when $\rho_k$  does not coincide with the  target state.   In closed-loop,  these Lyapunov functions become strict and the   convergence analysis is notably simplified since invariance principles are not necessary. The construction of these strict Lyapunov functions is based on the Hamiltonian $H$ underlying the controlled unitary evolution and relies on  the connectivity of the  graph attached to $H$. They are obtained  by inverting a Laplacian matrix derived from $H$ and the quantum states that are untouched by the QND measurements.

In section~\ref{sec:model}, we describe the finite dimensional  Markovian model together with the main modeling assumptions.
Section~\ref{sec:openloop} is devoted to the open-loop behavior (Theorem~\ref{thm:first}) that can be seen as a  non-deterministic protocol for  preparing a finite number of isolated and orthogonal  quantum states.
In Section~\ref{sec:closedloop}, we  present  the main ideas underlying the construction of these strict control-Lyapunov functions $W_\epsilon$. Then we  define the connectivity graph,  the Laplacian matrix  attached to $H$ and two  technical lemmas  used during this construction. Finally  Theorem~\ref{thm:main} describes
the stabilizing feedback derived from $W_\epsilon$. Closed-loop simulations corresponding to an experimental setup at Ecole Normale Sup\'{e}rieure  are sketched in section~\ref{sec:simul}.

The authors thank M. Brune, I.~Dotsenko, S.~Gleyzes, S.~Haroche and J.M. Raimond for enlightening discussions and references. Advices of L. Praly concerning control-Lyapunov functions  are also acknowledged.

\section{The non-linear Markov model} \label{sec:model}

We consider a finite dimensional quantum system (the underlying Hilbert space $\HH=\CC^d$ is of  dimension $d>0$)  being measured through a generalized measurement procedure placed discretely in time. Between two measurements, the  system undergoes a unitary evolution depending on a scalar control input $u\in\RR$. The dynamics of such discrete time quantum systems  is described by a non-linear controlled  Markov chain whose structure is derived from  quantum physics. We just sketch here this structure with a mathematical viewpoint. A tutorial physical exposure can be found in~\cite{haroche-raimond:book06}.

The system state is described by the density operator $\rho$ belonging to $\DD(\HH)$ the set of positive, Hermitian matrices of trace one:
\begin{equation*}
\DD(\HH):=\{\rho\in\CC^{ d\times d}~|\quad\rho=\rho^\dag,\quad\tr{\rho}=1,\quad\rho\geq 0\}.
\end{equation*}
The generalized measurement procedure admits  $m>0$ different discrete values $\mu\in\{1,\ldots,m\}$: to each measurement outcome $\mu$  is attached a Kraus operator described by a matrix   $M_{\mu}\in \CC^{ d\times d}$. The  Kraus operators $(M_\mu)_{\mu\in\{1,\ldots,m\}}$ satisfy the constraint $\sum_{\mu=1}^{m}M_{\mu}^{\dag}M_{\mu}=\II$ where $\II$ is the identity matrix. In general the $M_\mu$ are not necessarily Hermitian.
The controlled  evolution between two measures $U_u$ is defined by the unitary operator  $\exp(-iu H)=U_u$ where $H$ is a Hermitian operator $H\in\CC^{ d\times d}$ with $H^\dag=H$.

The random evolution of the state $\rho_k\in\DD(\HH)$ at time step $k$ is  modeled through the following Markov process: \begin{equation}\label{eq:dynamic}
 \rho_{k+1}=\UU_{u_k}(\MM_{\mu_k}(\rho_k)),
\end{equation}
where
\begin{itemize}
\item $u_k\in\RR$ is the control at step $k$,
\item $\mu_k$ is a random variable taking values $\mu$ in $\{1,\cdots,m\}$  with probability $p_{\mu,\rho_k}=\tr{M_{\mu}\rho_k M_{\mu}^\dag}$,
\item $\UU_{u}$ is the super-operator
$$ \UU_{u}:\quad \DD(\HH)\ni \rho \mapsto U_{u}\rho U_{u}^\dag\in\DD(\HH),$$

\item For each $\mu$, $\MM_{\mu}$ is the super-operator
$$
\MM_{\mu}:\quad \rho \mapsto\tfrac{M_{\mu}\rho M_{\mu}^\dag}{\tr{M_{\mu}\rho M_{\mu}^{\dag}}}
\in \DD(\HH)$$ defined  for  $\rho\in\DD(\HH)$ such that $\tr{M_{\mu}\rho M_{\mu}^\dag} \neq 0$.
\end{itemize}
We suppose throughout this paper that the two following assumptions are verified
by the system under consideration.

\begin{ass}\label{ass:imp}
The measurement operators $M_\mu$ are diagonal in the same orthonormal  basis $\{\, \ket{n}\, |\quad n\in\{1,\cdots, d\}\}$, therefore $M_\mu=\sum_{n=1}^{ d}c_{\mu,n}\ket{n}\bra{n}$
with $c_{\mu,n}\in\CC.$
\end{ass}
\begin{ass}\label{ass:impt}
 For all $n_1\neq n_2$ in $\{1,\cdots, d\},$ there~exists a $\mu\in\{1,\cdots,m\}$ such that $|c_{\mu,n_1}|^2\neq |c_{\mu,n_2}|^2.$
\end{ass}

 Assumption~\ref{ass:imp} means that the considered measurement process   achieves a   Quantum Non Demolition (QND) measurement  for the physical observables given by orthogonal projections   over the states  $\bigl\{\ket{n}\,|\quad n\in\{1,\cdots, d\}\bigr\}.$  This implies that for  $u_k\equiv 0$, any $\rho=\ket{n}\bra{n}$ corresponding to the orthogonal projector on the basis vector $\ket{n}$, $n\in\{1,\ldots, d\}$, is a fixed point of the Markov process~\eqref{eq:dynamic}. Since the operators $M_\mu$ must satisfy $\sum_{\mu=1}^{m}M_{\mu}^{\dag}M_{\mu}=\II$, we have, according to assumption~\ref{ass:imp}, $\sum_{\mu=1}^{m}|c_{\mu,n}|^2=1$ for all $n\in\{1,\cdots, d\}.$

Assumption~\ref{ass:impt} means that there exists a $\mu$ such that the statistics  when  $u_k\equiv 0$  for obtaining the measurement result $\mu$ are different for the fixed points $\ket{n_1}\bra{n_1}$  and $\ket{n_2}\bra{n_2}$.  This follows by noting that $\tr{M_{\mu}\ket{n}\bra{n}M_{\mu}^\dag}=|c_{\mu,n}|^2$ for $n\in\{1,\cdots, d\}$.

\section{Convergence of the open-loop dynamics}\label{sec:openloop}
When the control vanishes ($u_k=0,~\forall k$), the dynamics is simply given by
\begin{equation}\label{eq:dynopen}
\rho_{k+1} = \MM_{\mu_k}(\rho_k)
\end{equation}
where $\mu_k$ is a random variable with discrete values in $\{1,\ldots,m\}$. The probability $p_{\mu,\rho_k}$ to have $\mu_k=\mu$ depends on $\rho_k$: $p_{\mu,\rho_k}=\tr{M_{\mu}\rho_k M_{\mu}^\dag}$.
  We have then the following theorem characterizing the open-loop asymptotic behavior:
\begin{thm}\label{thm:first}
Consider a Markov process $\rho_k$ obeying the dynamics of~\eqref{eq:dynopen} with an initial condition $\rho_0$ in $\DD(\HH)$. Then
\begin{itemize}
\item with probability one, $\rho_k$ converges  to one of the $ d$ states $\ket{n}\bra{n}$ with $n\in\{1,\cdots, d\}.$
\item  the probability of convergence towards the state $\ket{n}\bra{n}$ depends only on the initial condition $\rho_0$ and is given by
\[\tr{\rho_0\ket{n}\bra{n}}=\bra{n}\rho_0\ket{n}.\]
\end{itemize}
\end{thm}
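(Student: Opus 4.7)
My plan is to reduce the theorem to two elementary martingale calculations. First, I observe that $M_\mu\ket{n}=c_{\mu,n}\ket{n}$ by Assumption~\ref{ass:imp}, which gives
\[
\bra{n}\MM_\mu(\rho)\ket{n}=\frac{|c_{\mu,n}|^2\,\bra{n}\rho\ket{n}}{\tr{M_\mu\rho M_\mu^\dag}}.
\]
Combined with the normalization $\sum_\mu |c_{\mu,n}|^2=1$ coming from $\sum_\mu M_\mu^\dag M_\mu=\II$, this yields
\[
\EE{\bra{n}\rho_{k+1}\ket{n}\mid\rho_k}=\bra{n}\rho_k\ket{n},
\]
so each diagonal element is a bounded $[0,1]$-valued martingale. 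Doob's theorem then provides an almost sure limit $q_n:=\lim_k\bra{n}\rho_k\ket{n}$.

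Next I would exhibit a strictly contracting Lyapunov function to enforce $q_{n_1}q_{n_2}=0$ for each pair $n_1\neq n_2$. Consider $V_{n_1,n_2}(\rho):=\sqrt{\bra{n_1}\rho\ket{n_1}\bra{n_2}\rho\ket{n_2}}$. An analogous computation gives
\[
\EE{V_{n_1,n_2}(\rho_{k+1})\mid\rho_k}=\eta_{n_1,n_2}\,V_{n_1,n_2}(\rho_k),\qquad \eta_{n_1,n_2}:=\sum_\mu |c_{\mu,n_1}|\,|c_{\mu,n_2}|,
\]
where the probability weights $p_{\mu,\rho_k}$ in the numerator and denominator cancel. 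Cauchy--Schwarz yields $\eta_{n_1,n_2}\le 1$, with equality only if the vectors $(|c_{\mu,n_1}|)_\mu$ and $(|c_{\mu,n_2}|)_\mu$ are proportional; since both have unit Euclidean norm, proportionality would force $|c_{\mu,n_1}|=|c_{\mu,n_2}|$ for every $\mu$, contradicting Assumption~\ref{ass:impt}. Hence $\eta_{n_1,n_2}<1$, $\EE{V_{n_1,n_2}(\rho_k)}$ decays geometrically to zero, and the non-negative supermartingale $V_{n_1,n_2}(\rho_k)$ converges almost surely to $0$. I expect this strict contraction to be the main technical point; everything else is bookkeeping built on Assumptions~\ref{ass:imp} and~\ref{ass:impt}.

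To conclude, I combine the two facts. The random vector $q=(q_n)$ satisfies $\sum_n q_n=1$ and $q_{n_1}q_{n_2}=0$ for all pairs, so exactly one entry $q_N$ equals $1$ for some random $N\in\{1,\ldots,d\}$. Positivity $\rho_k\ge 0$ forces $|\bra{n_1}\rho_k\ket{n_2}|^2\le\bra{n_1}\rho_k\ket{n_1}\bra{n_2}\rho_k\ket{n_2}\to 0$, so every off-diagonal entry vanishes almost surely and $\rho_k\to\ket{N}\bra{N}$. For the law of $N$, dominated convergence applied to the diagonal martingale identity gives $\bra{n}\rho_0\ket{n}=\EE{\bra{n}\rho_k\ket{n}}\to\EE{\Id_{\{N=n\}}}=\PP{N=n}$, which is exactly the formula stated in the theorem.
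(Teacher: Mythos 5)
Your proof is correct, and it takes a genuinely different route from the paper's. The paper works with the single sub-martingale $V(\rho)=\sum_n \tfrac{1}{2}\left(\bra{n}\rho\ket{n}\right)^2$, computes its conditional increment $Q(\rho_k)$ explicitly, invokes the stochastic invariance-type result of the appendix (Theorem~\ref{thm:app}, after Kushner) to place the $\omega$-limit set inside $\{Q=0\}$, and then uses Assumption~\ref{ass:impt} in an algebraic analysis of $Q(\rho_\infty)=0$ to conclude that the limit points are the projectors $\ket{n}\bra{n}$; the convergence probabilities are then obtained, as in your argument, from the diagonal martingales. You instead introduce the pairwise functions $V_{n_1,n_2}(\rho)=\sqrt{\bra{n_1}\rho\ket{n_1}\bra{n_2}\rho\ket{n_2}}$ and show they contract in conditional expectation by the factor $\eta_{n_1,n_2}=\sum_\mu|c_{\mu,n_1}|\,|c_{\mu,n_2}|<1$, the strict inequality coming from the Cauchy--Schwarz equality case combined with Assumption~\ref{ass:impt}. (A small remark: the conditional expectation is a sum over $\mu\in I_{\rho_k}$ only; but either $V_{n_1,n_2}(\rho_k)=0$, or every excluded $\mu$ has $c_{\mu,n_1}=c_{\mu,n_2}=0$, so your identity --- and in any case the supermartingale inequality, which is all you need --- does hold.) Your route is more elementary and quantitatively sharper: Doob's convergence theorem plus Fatou replace the appendix theorem and the $\omega$-limit-set reasoning, and you obtain geometric decay of $\EE{V_{n_1,n_2}(\rho_k)}$, i.e.\ an exponential rate for the vanishing of the products $q_{n_1}q_{n_2}$, which the paper's argument does not deliver and which is relevant to the exponential-convergence extension mentioned in the concluding remarks; your use of $|\bra{n_1}\rho_k\ket{n_2}|^2\le\bra{n_1}\rho_k\ket{n_1}\bra{n_2}\rho_k\ket{n_2}$ also makes the convergence of the off-diagonal entries fully explicit. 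What the paper's approach buys in exchange is the explicit function $Q$ of~\eqref{eq:Q}, which is reused verbatim as $Q_1$ in the proof of the closed-loop Theorem~\ref{thm:main}, so its computation does double duty in the feedback analysis.
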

The proof is a generalization of the one given in~\cite{amini-et-al-10}.
\begin{proof}
For any $n\in\{1,\ldots,d\}$, $\tr{\ket{n}\bra{n}\rho}$ is a martingale. This results from
\begin{multline*}
\EE{\tr{\ket{n}\bra{n}\rho_{k+1}}|\rho_k}\\=\sum_{\mu=1}^{m}\tr{M_{\mu}\rho_k M_{\mu}^\dag}\tr{\ket{n}\bra{n}\MM_{\mu}(\rho_k)}\\
=\sum_{\mu=1}^{m}\tr{\ket{n}\bra{n}M_{\mu}\rho_k M_{\mu}^\dag}=\tr{\sum_{\mu=1}^{m}M_{\mu}^{\dag} M_{\mu}\ket{n}\bra{n}\rho_k}\\
=\tr{\ket{n}\bra{n}\rho_k},
\end{multline*}
where  we have used  the facts that $M_{\mu}$ and $\ket{n}\bra{n}$ commute and that $\sum_{\mu=1}^{m} M_{\mu}^\dag M_{\mu}=\Id$.
 Take the function
\begin{equation}\label{eq:V}
V(\rho):=\sum_{n=1}^{ d}f(\tr{\ket{n}\bra{n}\rho}).
\end{equation}
where $f(x)=\tfrac{x^2}{2}$.
The function $f$ being convex and each $\tr{\ket{n}\bra{n}\rho}$ being a martingale, we infer that $V(\rho)$ is a sub-martingale
\begin{equation*}
\EE{V(\rho_{k+1})|\rho_k}\geq V(\rho_k).
\end{equation*}
More precisely, we have
\begin{multline*}
\EE{V(\rho_{k+1})|\rho_k}=\\
\sum_{n=1}^{ d}\sum_{\mu\in I_{\rho_k}}\tr{M_{\mu}\rho_k M_{\mu}^\dag}f\left(\tfrac{\tr{\ket{n}\bra{n}M_{\mu}\rho_k M_{\mu}^\dag}}{\tr{M_{\mu}\rho_k M_{\mu}^\dag}}\right),
\end{multline*}
with $I_{\rho_k}=\Bigl\{\mu\in\{1,\cdots,m\}~|~\tr{M_{\mu}\rho_k M_{\mu}^\dag}\neq 0\Bigr\}$.
For all sequence of reals $x_1,\cdots,x_m$ and $\theta_1,\cdots,\theta_m$ in the interval $[0,1]$ with $\sum_{\mu=1}^{m}\theta_\mu=1$, we have the identity
\begin{align}\label{eq:main}
\sum_{\mu=1}^{m}\theta_\mu f(x_\mu)=f\left(\sum_{\mu=1}^{m}\theta_\mu x_\mu\right)+\sum_{\mu,\nu=1}^{m} \theta_\mu\theta_\nu\tfrac{(x_\mu-x_\nu)^2}{4}.
\end{align}
For each $\mu$, let $\theta_\mu=\tr{M_{\mu}\rho_k M_{\mu}^\dag}$, $
 x_\mu=\tfrac{\tr{\ket{n}\bra{n}M_{\mu}\rho_k M_{\mu}^\dag}}{\tr{M_{\mu}\rho_k M_{\mu}^\dag}}$ if $\theta_\mu >0$ and
 $x_\mu=0$ otherwise.
Identity~\eqref{eq:main} yields
\begin{multline}\label{eq:res}
\EE{V(\rho_{k+1})|\rho_k}-V(\rho_k)=\\
 \tfrac{1}{4}\sum_{n=1}^{ d}\sum_{\mu,\nu\in I_{\rho_k}}\tr{M_{\mu}\rho_k M_{\mu}^\dag}\tr{M_{\nu}\rho_k M_{\nu}^\dag}\ldots \\ \ldots
\left(\tfrac{|c_{\mu,n}|^2\bra{n}\rho_k\ket{n}}{\tr{M_{\mu}\rho_k M_{\mu}^\dag}}-\tfrac{|c_{\nu,n}|^2\bra{n}\rho_k\ket{n}}{\tr{M_{\nu}\rho_k M_{\nu}^\dag}}\right)^2.
\end{multline}
We have  used the fact that
$\bra{n}\MM_{\mu}(\rho_k)\ket{n}=\frac{|c_{\mu,n}|^2\bra{n}\rho_k\ket{n}}{\tr{M_{\mu}\rho_k M_{\mu}^\dag}}$.
Thus, $V(\rho_k)$ is a sub-martingale, and in addition we have a precise bound on the difference $\EE{V(\rho_{k+1})|\rho_k}-V(\rho_k)$. We denote by $Q(\rho_k)$ the right term in Equation~\eqref{eq:res}. Note that the sum in the definition of $Q(\rho_k)$ is over all $\mu,\nu \in I_{\rho_k}$. However, we can assume that the sum is actually over all $\mu,\nu$ in $\{1,\cdots,m\}$ by observing the following facts.

For any $\mu,\nu$, take  the  mapping
{\small $$
\rho \mapsto \tr{M_{\mu}\rho M_{\mu}^\dag}\tr{M_{\nu}\rho M_{\nu}^\dag}
\left(\tfrac{|c_{\mu,n}|^2\bra{n}\rho \ket{n}}{\tr{M_{\mu}\rho M_{\mu}^\dag}}-\tfrac{|c_{\nu,n}|^2\bra{n}\rho\ket{n}}{\tr{M_{\nu}\rho M_{\nu}^\dag}}\right)^2
$$}
defined only when  $\tr{M_{\mu}\rho M_{\mu}^\dag}\tr{M_{\nu}\rho M_{\nu}^\dag} >0$. Since this mapping is positive and
bounded by $\rho \mapsto\tr{M_{\mu}\rho M_{\mu}^\dag}\tr{M_{\nu}\rho M_{\nu}^\dag}$, it can be extended by continuity to any $\rho\in\DD(\HH)$ by  taking a null value when $\tr{M_{\mu}\rho M_{\mu}^\dag}\tr{M_{\nu}\rho M_{\nu}^\dag}=0$. Thus
\begin{multline}\label{eq:Q}
    Q(\rho)= \tfrac{1}{4}\sum_{n=1}^{ d}\sum_{\mu,\nu=1}^{m}
    \tr{M_{\mu}\rho M_{\mu}^\dag}\tr{M_{\nu}\rho M_{\nu}^\dag}\\
\left(\tfrac{|c_{\mu,n}|^2\bra{n}\rho\ket{n}}{\tr{M_{\mu}\rho M_{\mu}^\dag}}-\tfrac{|c_{\nu,n}|^2\bra{n}\rho\ket{n}}{\tr{M_{\nu}\rho M_{\nu}^\dag}}\right)^2
\end{multline}
 is continuously defined for any $\rho\in\DD(\HH)$  and still satisfies
$$
\EE{V(\rho_{k+1})|\rho_k}-V(\rho_k)= Q(\rho_k), \forall \rho_k\in\DD(\HH).
$$
By Theorem~\ref{thm:app} of the Appendix, the $\omega$-limit set $\Omega$ (in the sense of almost sure convergence), for the trajectories $\rho_k$, is a subset of the set $\{\rho\in\DD(\HH)|\quad Q(\rho)=0\}.$

Let us consider a density matrix $\rho_\infty$ in the $\omega$-limit set. Therefore $Q(\rho_\infty) =0$ implies
\begin{equation}\label{eq:ratio}
\tfrac{|c_{\mu,n}|^2\bra{n}\rho_\infty \ket{n}}{\tr{M_{\mu}\rho_\infty M_{\mu}^\dag}}-\tfrac{|c_{\nu,n}|^2\bra{n}\rho_\infty\ket{n}}{\tr{M_{\nu}\rho_\infty M_{\nu}^\dag}}\, =0,
\end{equation}
for all $\mu$ and $\nu$ in $I_{\rho_\infty}$ and for each basis element $\ket n$.
Since $\tr{\rho_\infty}=1,$ there is at least one  $\bar n$ such that $\bra{\bar n}\rho_\infty \ket{\bar n} >0$. Then
 Equation~\eqref{eq:ratio} simplifies to
\begin{align*}
\tr{M_{\mu}\rho_\infty M_{\mu}^\dag}|c_{\nu,\bar n}|^2=
\tr{M_{\nu}\rho_\infty M_{\nu}^\dag}|c_{\mu,\bar n}|^2,
\end{align*}
for all $\mu,\nu \in I_{\rho_\infty}$. Summing over $\nu \in I_{\rho_\infty}$, we find
\begin{multline*}
\tr{M_{\mu}\rho_\infty M_{\mu}^\dag} \, \Bigl(\sum_{\nu \in I_{\rho_\infty}}|c_{\nu,\bar n}|^2 \Bigr)=\\
\Bigl(\sum_{\nu \in I_{\rho_\infty}}\tr{M_{\nu}\rho_\infty M_{\nu}^\dag} \Bigr)\,|c_{\mu,\bar n}|^2
\end{multline*}
By definition of $I_{\rho_\infty}$
$$
\sum_{\nu \in I_{\rho_\infty}}\tr{M_{\nu}\rho_\infty M_{\nu}^\dag} =\sum_{\nu=1}^m\tr{M_{\nu}\rho_\infty M_{\nu}^\dag}
=1.
$$
Thus we have
$$
\tr{M_{\mu}\rho_\infty M_{\mu}^\dag} \Bigl(\sum_{\nu \in I_{\rho_\infty}}|c_{\nu,\bar n}|^2 \Bigr)=|c_{\mu,\bar n}|^2
.
$$
For $\nu\notin I_{\rho_\infty}$, $c_{\nu,\bar n}=0$ since
$ \sum_{n=1}^{d} |c_{\nu,n}|^2 \bra{n}\rho_\infty\ket{n}=0$,
each $\bra{n}\rho_\infty\ket{n} \geq 0$ and $\bra{\bar n}\rho_\infty\ket{\bar n} >0$. Thus
$\sum_{\nu \in I_{\rho_\infty}}|c_{\nu,\bar n}|^2= \sum_{\nu=1}^m |c_{\nu,\bar n}|^2=1$. Finally, we have
\begin{equation}\label{eq:coef}
\forall \mu\in I_{\rho_\infty}, \quad
\tr{M_{\mu}\rho_\infty M_{\mu}^\dag}=|c_{\mu,\bar n}|^2.
\end{equation}
as soon as $\bra{\bar n} \rho_\infty \ket{\bar n} >0$.

Assume now that exist $\bar n_1\neq \bar n_2$ in $\{1,\ldots,d\}$ such that
$\bra{\bar n_1} \rho_\infty \ket{\bar n_1} >0$ and $\bra{\bar n_2} \rho_\infty \ket{\bar n_2} >0$. Then~\eqref{eq:coef} implies that
\begin{equation} \label{eq:coefbis}
\forall \mu\in I_{\rho_\infty}, \quad
|c_{\mu,\bar n_1}|^2=|c_{\mu,\bar n_2}|^2.
\end{equation}
By Assumption~\ref{ass:impt}, there exists a $\bar \mu\in\{1,\cdots,m\}$ such that
$|c_{\bar\mu,\bar n_1}|^2\neq|c_{\bar\mu,\bar n_2}|^2$.
These terms cannot be simultaneously zero, thus $\tr{M_{\bar \mu}\rho_\infty M_{\bar \mu}^\dag}>0$,   $\bar \mu \in I_{\rho_\infty}$. This is  in  contradiction with~\eqref{eq:coefbis}.
 This closes the proof of the assertion:  the $\omega$-limit set is reduced to the set fixed point  $\ket{n}\bra{n},$ with $n\in\{1,\cdots, d\}$.

We have shown that the probability measure associated to the random variable $\rho_k$ converges to the probability measure
$
\sum_{n=1}^{ d}p_n\delta_{\ket{n}\bra{n}},
$
where $\delta_{\ket{n}\bra{n}}$ denotes the Dirac measure at $\ket{n}\bra{n}$ and $p_n$ is the probability of convergence towards $\ket{n}\bra{n}.$ In particular, we have
$
\EE{\tr{\rho_k\ket{n}\bra{n}}}\longrightarrow p_n.
$
But $\tr{\ket{n}\bra{n}\rho_k}$ is  a martingale thus  $\EE{\tr{\ket{n}\bra{n}\rho_k}}=\EE{\tr{\ket{n}\bra{n}\rho_0}}$ and consequently $p_n=\bra{n}\rho_0\ket{n}$.
\end{proof}

\section{Feedback stabilization}\label{sec:closedloop}

\subsection{Design of strict control-Lyapunov functions}
The goal is to design a feedback law that globally stabilizes the Markov chain~\eqref{eq:dynamic} towards a chosen target state $\ket{\bar n}\bra{\bar n}$, for some $\bar n$  among $\{1,\cdots, d\}$. In previous  publications~\cite{mirrahimi-handel:siam07,mirrahimi-et-al:cdc09,dotsenko-et-al:PRA09}, the proposed feedback schemes   tend to increase at each step the same open-loop martingale $V_{\bar n}(\rho)=\bra{\bar n} \rho \ket{\bar n}$: $u$ is chosen in order to increase $u \mapsto V_{\bar n} (\UU_u(\rho))$.  When $\rho=\ket n\bra n$ with  $n\neq \bar n$, $u \mapsto V_{n}(\rho)$ is minimum at $u=0$. Consequently, its first-order $u$-derivative vanishes at $u=0$. But its   second-order $u$-derivative could also vanishe  at $u=0$. For the photon-box  considered in~\cite{dotsenko-et-al:PRA09} (see also section~\ref{sec:simul}),  this happens  when $|n-\bar n| \geq 2$. Such lack of strong convexity  in $u$ when the image of $\rho$ is almost orthogonal to $\ket{\bar n}$, explains  the fact that, in previous works, the control $u$ is set  to a constant non-zero value when $V_{\bar n}(\rho)$ is close to $0$,

To improve convergence and avoid such constant feedback zone,   we propose  to modify $V_{\bar n}$ using the other open-loop martingales $V_n(\rho)=\bra n \rho \ket n$ and  the sub-martingale
$V(\rho)=\sum_{n=1}^d (\bra n \rho \ket n)^2$ used during the proof of theorem~\ref{thm:first}.  The goal of such modification  is to get  control Lyapunov functions  still admitting a unique global  maximum at $\ket{\bar n}\bra{\bar n}$ but being  strongly convex versus $u$ around $0$  when  $\rho$ is  close to any $\ket n\bra n$,  $n\neq \bar n$.

Take  $W_0(\rho)=\sum_{n=1}^d \sigma_n \bra n \rho \ket n$ with real coefficients $\sigma_n$ to be chosen such that $\sigma_{\bar n}$ remains   the largest one and such that, for any $n \neq \bar n$, the second-order $u$-derivative of $W_0(\UU_u(\ket n \bra n))$ at  $u=0$ is strictly positive. This yields to a set of linear equations (see lemma~\ref{lem:second}) in $\sigma_n$ that can be solved by inverting a Laplacian matrix (see lemma~\ref{lem:first}). Notice that $W_0$ is an open-loop martingale. To obtain a sub-martingale we consider (see theorem~\ref{thm:main}) $W_\epsilon(\rho)= W_0(\rho) + \epsilon V(\rho)$. For $\epsilon>0$ small enough:  $W_\epsilon(\rho)$ still admits a unique global maximum at $ \ket{\bar n}\bra{\bar n}$; for $u$ close to $0$,
$W_\epsilon(\UU_u(\ket n \bra n))$ is strongly convex for any $n\neq \bar n$ and strongly concave for $n=\bar n$. This implies that  $W_\epsilon$ is a control-Lyapunov function with arbitrary small control (see proof of theorem~\ref{thm:main}).

Let us continue  by some definitions and lemmas that underlay the construction of these  strict control-Lyapunov functions $W_\epsilon$.

\subsection{Connectivity graph and Laplacian matrix}
To the Hamiltonian operator $H$ defining the controlled unitary evolution $U_u=e^{-\imath u H}$, we associate its  undirected connectivity graph denote by $G^H$. This graph  admits  $d$ vertices labeled by $n\in\{1,\ldots,d\}$. Two different vertices  $n_1\neq n_2$ ($n_1,n_2\in\{1,\ldots,d\}$) are linked by an  edge, if and only if, $\bra{n_1}H\ket{n_2} \neq 0$. Attached to $H$, we also associate $R^H$, the real symmetric matrix $d\times d$ (Laplacian matrix) with entries
{\small\begin{align}\label{eq:matrix}
R^H_{n_1,n_2}=2\Big(\delta_{n_1,n_2}\bra{n_1}H^2\ket{n_2}-|\bra{n_1}H\ket{n_2}|^2 \Big).
\end{align}}
\begin{lem}\label{lem:first}
Assume the  graph $G^H$  to be connected. Then  for any positive reals
$\lambda_n$, $n\in\{1,\ldots, d\}$, $n\neq \bar n$, there exists a vector $\sigma=(\sigma_n)_{n\in\{1,\ldots,d\}}$ of $\RR^d$ such that
$R^H\sigma=-\lambda$
where $\lambda$ is the vector of $\RR^d$ of components $\lambda_n$ for $n\neq \bar n$ and $\lambda_{\bar n} = - \sum_{n\neq \bar n} \lambda_n$.
\end{lem}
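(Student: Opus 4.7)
The plan is to recognize $R^H$ as a weighted graph Laplacian associated with $G^H$, and then exploit the standard spectral characterization of connected graph Laplacians together with the symmetry of $R^H$ to deduce the solvability of $R^H\sigma = -\lambda$.

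First I would unpack the diagonal of $R^H$. Using $H^\dag=H$ and inserting the resolution of the identity, $\bra{n}H^2\ket{n}=\sum_k |\bra{n}H\ket{k}|^2$, so that
\begin{equation*}
R^H_{n,n}=2\sum_{k\neq n}|\bra{n}H\ket{k}|^2 = -\sum_{k\neq n}R^H_{n,k}.
\end{equation*}
Thus every row of $R^H$ sums to zero, i.e.\ $R^H\mathbf{1}=0$ where $\mathbf{1}=(1,\ldots,1)^\top$. Moreover the off-diagonal entries $R^H_{n_1,n_2}=-2|\bra{n_1}H\ket{n_2}|^2$ are $\leq 0$, with equality precisely when $\{n_1,n_2\}$ is not an edge of $G^H$. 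Hence $R^H$ is exactly the (negative of the) weighted Laplacian of $G^H$ with strictly positive edge weights $w_{n_1,n_2}=2|\bra{n_1}H\ket{n_2}|^2$.

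Next I would invoke the standard fact on weighted graph Laplacians: for a connected graph with strictly positive edge weights, the kernel of the Laplacian is one-dimensional, spanned by $\mathbf{1}$. A short self-contained proof: for any $v\in\RR^d$ one computes the quadratic form
\begin{equation*}
-v^\top R^H v = \sum_{n_1<n_2}2|\bra{n_1}H\ket{n_2}|^2(v_{n_1}-v_{n_2})^2\geq 0,
\end{equation*}
so $R^H v=0$ forces $v_{n_1}=v_{n_2}$ whenever $\bra{n_1}H\ket{n_2}\neq 0$; the connectedness of $G^H$ then propagates this equality to all pairs, giving $v\in\RR\mathbf{1}$.

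Since $R^H$ is real symmetric, $\operatorname{range}(R^H)=\ker(R^H)^\perp=\mathbf{1}^\perp$, i.e.\ the range of $R^H$ is exactly the hyperplane of vectors whose entries sum to zero. By the hypothesis on the $\lambda_n$, the vector $-\lambda$ satisfies $\sum_{n=1}^d(-\lambda_n)=-\lambda_{\bar n}-\sum_{n\neq\bar n}\lambda_n=0$, so $-\lambda\in\operatorname{range}(R^H)$ and a $\sigma\in\RR^d$ with $R^H\sigma=-\lambda$ exists (unique up to an additive multiple of $\mathbf{1}$). No step is a genuine obstacle here; the only content is the identification of $R^H$ as a connected weighted Laplacian, which we already verified above.
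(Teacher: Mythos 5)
Your proof is correct and follows essentially the same route as the paper: recognize $R^H$ as the weighted Laplacian of $G^H$ (zero row sums, nonpositive off-diagonal entries), show its kernel is spanned by the all-ones vector via the quadratic form plus connectedness, and use symmetry to conclude the range is the hyperplane $\mathbf{1}^\perp$, which contains $-\lambda$. The only blemish is a sign slip: $R^H$ is itself the (positive semidefinite) Laplacian, so the identity should read $v^\top R^H v=\sum_{n_1<n_2}2|\bra{n_1}H\ket{n_2}|^2\,(v_{n_1}-v_{n_2})^2\geq 0$, not $-v^\top R^H v$; this does not affect the argument, since you only evaluate the form on kernel vectors.
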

\medskip
\begin{proof} Note that $R^H$ is symmetric and the sum of the entries for any column and any row of $R^H$ is equal to zero. Therefore, the vector $(1\cdots 1)^T$ is in the kernel of $R^H$. The diagonal (resp.non-diagonal) components of $R^H$ are positive (resp. negative). Therefore $R^H$ is a Laplacian matrix (see~\cite[Ch. 4]{beineke-wilson:book04}). The connectivity graph associated to $R^H$ coincides with $G^H$. Since this graph is supposed connected, classical results of graph theory (see, e.g.,~\cite[Theorem $3.1$]{beineke-wilson:book04}) imply that  the "constant" vector $(1,\cdots, 1)^T$ spans the kernel of $R^H$. Therefore, the dimension of the image of $R^H$ is equal to $d-1$. Since $R^H$ is symmetric, its image  coincides with the orthogonal to its kernel.
For the sake of completeness, here we give a simple proof of this statement. Indeed, for any vector $X$ in the kernel of $R^H$, we have
$$
\sum_{n_1,n_2\in\{1,\cdots, d\}}R^H_{n_1,n_2}(X_{n_1}-X_{n_2})^2=X^TRX=0.
$$
This implies
$R^H_{n_1,n_2}(X_{n_1}-X_{n_2})^2=0$, $\forall n_1,n_2\in\{1,\cdots, d\}$.
As the  graph of $R^H$ is connected, we necessarily have $X_{n_1}=X_{n_2}$ for all $n_1,n_2\in\{1,\cdots, d\}.$
Thus any vector  orthogonal to  $(1\cdots 1)^T$ is in the image of $R^H$. The vector $\lambda$ is orthogonal to $(1,\cdots,1)^T$.
\end{proof}

\begin{lem}\label{lem:second} Take $\bar n\in\{1,\ldots, d\}$ and consider  $\lambda_n >0$, for $n\in\{1,\ldots,d\}/\{\bar n\}$. Assume $G^H$ connected and consider the vector $\sigma=(\sigma_n) \in\RR^d$ given by Lemma~\ref{lem:first}. For any $\rho\in\DD(\HH)$ we set
\begin{equation}\label{eq:W0}
W_{0}(\rho) =\sum_{n=1}^{ d}\sigma_n\tr{\ket{n}\bra{n}\rho}= \sum_{n=1}^{ d} \sigma_n \bra n \rho \ket n.
\end{equation}
 Then for any $n\in\{1,\ldots,d\}/\{\bar n\}$ we have
 $$
 \left.\tfrac{d^2 W_0\big(\UU_u(\ket{n}\bra{n})\big)}{d u^2}\right|_{u=0}= \lambda_n >0
 $$
 and
 $$
  \left.\tfrac{d^2 W_0\big(\UU_u(\ket{\bar n}\bra{\bar n })\big)}{d u^2}\right|_{u=0}= \lambda_{\bar n }=-\sum_{n\neq\bar n} \lambda_n <0
 $$
\end{lem}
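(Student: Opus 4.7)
The plan is a direct second-derivative computation that, after simplification, recognizes the relevant Laplacian quadratic form.

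First I would write $W_0$ as a linear functional, $W_0(\rho)=\tr{\Sigma\rho}$ with $\Sigma=\sum_{n'}\sigma_{n'}\ket{n'}\bra{n'}$, so that $W_0$ is smooth in its argument and the chain rule applies cleanly. Next I would differentiate the unitary conjugation: since $\tfrac{d}{du}U_u=-iHU_u$ and $H^\dag=H$, one gets $\tfrac{d}{du}\UU_u(\rho)=-i[H,\UU_u(\rho)]$ and hence
\begin{equation*}
\left.\tfrac{d^2}{du^2}\UU_u(\rho)\right|_{u=0}=-[H,[H,\rho]].
\end{equation*}
Setting $\rho=\ket{n}\bra{n}$ and expanding the double commutator,
\begin{equation*}
[H,[H,\ket{n}\bra{n}]]=H^2\ket{n}\bra{n}-2H\ket{n}\bra{n}H+\ket{n}\bra{n}H^2.
\end{equation*}

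Then I would take the trace against $\Sigma$, using that $\Sigma$ is diagonal in the $\{\ket{n'}\}$ basis so that $\bra{n}\Sigma H^2\ket{n}=\sigma_n\bra{n}H^2\ket{n}$ and $\bra{n}H\Sigma H\ket{n}=\sum_{n'}\sigma_{n'}|\bra{n'}H\ket{n}|^2$. This produces
\begin{equation*}
\left.\tfrac{d^2W_0(\UU_u(\ket{n}\bra{n}))}{du^2}\right|_{u=0}=-2\sigma_n\bra{n}H^2\ket{n}+2\sum_{n'}\sigma_{n'}|\bra{n'}H\ket{n}|^2.
\end{equation*}
Comparing with the definition~\eqref{eq:matrix} of $R^H$ and noting $|\bra{n'}H\ket{n}|^2=|\bra{n}H\ket{n'}|^2$, the right-hand side equals $-(R^H\sigma)_n$.

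The final step is to invoke Lemma~\ref{lem:first}: by construction $R^H\sigma=-\lambda$, so $-(R^H\sigma)_n=\lambda_n$. For $n\neq\bar n$ this is strictly positive by hypothesis, and for $n=\bar n$ it equals $-\sum_{n\neq\bar n}\lambda_n<0$, which is exactly the two claimed identities. There is no real obstacle here; the only point requiring a bit of care is the bookkeeping of the double commutator and recognizing the resulting expression as the $n$-th component of $R^H\sigma$, which is why packaging the diagonal coefficients into the operator $\Sigma$ at the outset is useful.
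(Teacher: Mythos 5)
Your proposal is correct and follows essentially the same route as the paper: both compute $\left.\tfrac{d^2}{du^2}\UU_u(\ket{n}\bra{n})\right|_{u=0}=-[H,[H,\ket{n}\bra{n}]]$ (the paper via a second-order Baker--Campbell--Hausdorff expansion, you via exact differentiation of the conjugation), pair it with the diagonal weights $\sigma_l$, and recognize the result as $-(R^H\sigma)_n$, which equals $\lambda_n$ by Lemma~\ref{lem:first}. Your packaging of the coefficients into the operator $\Sigma$ is only a cosmetic streamlining of the paper's term-by-term identity $\tr{[H,\ket{n}\bra{n}][H,\ket{l}\bra{l}]}=-R^H_{n,l}$.
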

\begin{proof} For any $n$, set $g_n(u)= W_0(\UU_u(\ket{n}\bra{n})=W_0 (e^{-\imath u H}\ket{n}\bra{n} e^{\imath u H}) $.
The Baker-Campbell-Hausdorff formula yields up to third order terms in $u$ ($[\cdot,\cdot]$ is the commutator):
$$
\UU_u(\ket n \bra n)\approx
\ket n \bra n - \imath u [H,\ket n \bra n] - \tfrac{u^2}{2} [H,[H,\ket n \bra n]]
.
$$
Consequently for any $l\in\{1, \ldots, d\}$, we have
{\small
\begin{multline}\label{eq:d2tr}
\tr{\UU_u(\ket{n}\bra{n})\ket{l}\bra{l}}
\approx
\tr{\ket{l}\bra{l}\big(\ket{n}\bra{n}-\imath u[H,\ket{n}\bra{n}]\big)}\\
-\tfrac{u^2}{2}\tr{\ket{l}\bra{l}\big(\left[H,[H,\ket{n}\bra{n}]\right]\big)}\\
=\left(\delta_{l,n}+\tfrac{u^2}{2}\tr{[H,\ket{n}\bra{n}][H,\ket{l}\bra{l}]}\right),
\end{multline}}
since $\tr{\ket{l}\bra{l}[H,\ket{n}\bra{n}]}=-\tr{[\ket{l}\bra{l},\ket{n}\bra{n}] H}=0$ because $\ket{l}\bra{l}$ commutes with $\ket{n}\bra{n}$), and since
{ \small $$
 \tr{\ket{l}\bra{l}[H,[H,\ket{n}\bra{n}]]}
 =-\tr{[H,\ket{n}\bra{n}][H,\ket{l}\bra{l}]}.
$$}
Thus up to third order terms in $u$, we have
{\small
\begin{multline*}
g_n(u)
=\sum_{l=1}^{ d}\sigma_l\left(\delta_{l,n}+\tfrac{u^2}{2}\tr{[H,\ket{n}\bra{n}][H,\ket{l}\bra{l}]}\right),
\end{multline*}}
Therefore:
\begin{multline*}
\left.\tfrac{\partial^2 W_0\big(\UU_u(\ket{n}\bra{n})\big)}{\partial u^2}\right|_{u=0}
=\sum_{l=1}^{ d} \sigma_l\tr{[H,\ket{n}\bra{n}][H,\ket{l}\bra{l}]}.
\end{multline*}
It is not difficult to see that
$\tr{[H,\ket{n}\bra{n}][H,\ket{l}\bra{l}]}=-R^H_{n,l}$
Thus $\left.\tfrac{\partial^2 W_0\big(\UU_u(\ket{n}\bra{n})\big)}{\partial u^2}\right|_{u=0}
=-\sum_{l=1}^{ d} R^H_{n,l} \sigma_l$.
\end{proof}

\subsection{The global stabilizing feedback}

The main result is expressed through the following theorem.
\begin{thm}\label{thm:main}
Consider the controlled Markov chain of state  $\rho_k$ obeying~\eqref{eq:dynamic}.
 Assume that the  graph $G^H$ associated to the Hamiltonian $H$  is connected and that the Kraus operators satisfy assumptions~\ref{ass:imp} and~\ref{ass:impt}. Take $\bar n\in\{1,\ldots, d\}$ and $d-1$ strictly positive real numbers $\lambda_n >0$, $n\in\{1,\ldots,d\}/\{\bar n\}$. Consider the component $(\sigma_n)$ of $\sigma\in\RR^d$ defined by Lemma~\ref{lem:first}.
Denote by $\rho_{k+\half}=\MM_{\mu_k}(\rho_k)$ the quantum state just after the measurement outcome $\mu_k$ at step $k$.
Take $\bar u >0$ and consider  the following feedback law
\begin{equation}\label{eq:control}
u_k=K(\rho_{k+\half})=\underset{u\in[-\bar u,\bar u]}{\text{argmax}}\Big(W_\epsilon\big(\UU_{u}\big(\rho_{k+\half}\big)\big)\Big),
\end{equation}
where the control-Lyapunov function $W_{\epsilon}(\rho)$ is defined by
\begin{equation}\label{eq:Weps}
W_{\epsilon}(\rho)=\sum_{n=1}^d \left(\sigma_n \bra{n}\rho\ket{n} + \tfrac{\epsilon}{4} \left( \bra{n}\rho\ket{n} \right)^2 \right)
\end{equation}
with the   parameter $\epsilon>0$ not too large to ensure that\small
$$
\forall n \in\{1,\ldots,d\}/\{\bar n\}, ~
\lambda_n + \epsilon \left( (\bra n H \ket n )^2 - \bra n H^2 \ket n  \right) >0
.
$$\normalsize
Then, for any $\rho_0\in\DD(\HH)$,  the closed-loop trajectory $\rho_k$ converges almost surely to the pure state $\ket{\bar n}\bra{\bar n}$.
\end{thm}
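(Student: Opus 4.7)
The plan is to follow the sub-martingale plus invariance-principle strategy of Theorem~\ref{thm:first}, but applied in closed loop to the Lyapunov function $W_\epsilon=W_0+\tfrac{\epsilon}{2}V$ (with $V$ as in~\eqref{eq:V}). First I would split each update into the measurement half-step $\rho_k\mapsto\rho_{k+\half}=\MM_{\mu_k}(\rho_k)$ and the feedback-unitary half-step $\rho_{k+\half}\mapsto\rho_{k+1}=\UU_{u_k}(\rho_{k+\half})$. On the measurement half-step, each $\bra{n}\rho\ket{n}$ is an open-loop martingale, so $W_0$ is a martingale; and~\eqref{eq:res} gives $\EE{V(\rho_{k+\half})\mid\rho_k}-V(\rho_k)=Q(\rho_k)\ge 0$. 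On the unitary half-step, since $0\in[-\bar u,\bar u]$ and $\UU_0=\II$, the definition of $K$ as an argmax yields $W_\epsilon(\rho_{k+1})\ge W_\epsilon(\rho_{k+\half})$ pointwise. Combining the two,
$$
\EE{W_\epsilon(\rho_{k+1})\mid\rho_k}-W_\epsilon(\rho_k)
= \tfrac{\epsilon}{2}Q(\rho_k) + \sum_{\mu}p_{\mu,\rho_k}\Delta_\mu(\rho_k)\ge 0,
$$
with $\Delta_\mu(\rho)=W_\epsilon(\UU_{K(\MM_\mu(\rho))}(\MM_\mu(\rho)))-W_\epsilon(\MM_\mu(\rho))\ge 0$, so $W_\epsilon(\rho_k)$ is a bounded sub-martingale.

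Next I would invoke the invariance principle (Theorem~\ref{thm:app}) to conclude that the almost-sure $\omega$-limit set $\Omega$ is contained in the zero set of this expected increment. On $\Omega$ both non-negative contributions must vanish simultaneously: $Q(\rho)=0$ forces $\rho=\ket{n}\bra{n}$ for some $n$ exactly as in the proof of Theorem~\ref{thm:first}; and for such a state $\MM_\mu(\ket n\bra n)=\ket n\bra n$ whenever $|c_{\mu,n}|>0$, so $\Delta_\mu(\ket n\bra n)=0$ forces $u=0$ to be a global maximizer of $u\mapsto W_\epsilon(\UU_u(\ket n\bra n))$ on $[-\bar u,\bar u]$. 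To rule out $n\neq\bar n$, I compute the second $u$-derivative of this map at $u=0$: Lemma~\ref{lem:second} handles the $W_0$-part, while the quadratic correction is evaluated by combining~\eqref{eq:d2tr} with the identity $\tr{[H,\ket n\bra n]^2}=-R^H_{n,n}$. The resulting
$$
\left.\tfrac{d^2 W_\epsilon(\UU_u(\ket n\bra n))}{du^2}\right|_{u=0}
= \lambda_n + \epsilon\bigl((\bra n H\ket n)^2-\bra n H^2\ket n\bigr)
$$
is strictly positive by the smallness assumption on $\epsilon$, so $u=0$ is a strict local \emph{minimum} and cannot be the argmax; this contradicts $\Delta_\mu(\ket n\bra n)=0$. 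Hence $n=\bar n$, $\Omega=\{\ket{\bar n}\bra{\bar n}\}$, and the asserted almost-sure convergence follows since $\DD(\HH)$ is compact.

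The main obstacle is the second-derivative computation of the quadratic correction: one must exploit the vanishing of the first-order coefficient in~\eqref{eq:d2tr} and identify $\tr{[H,\ket n\bra n][H,\ket l\bra l]}$ with $-R^H_{n,l}$, which is precisely what ties the Lyapunov design to the Laplacian matrix and fixes the exact form of the smallness condition on $\epsilon$. A secondary consistency check concerns the target itself: applying the same formula with $n=\bar n$ yields a strictly negative value (since $\lambda_{\bar n}<0$ and $\bra{\bar n}H^2\ket{\bar n}\ge(\bra{\bar n}H\ket{\bar n})^2$), so $u=0$ is a strict local maximum of $W_\epsilon(\UU_u(\ket{\bar n}\bra{\bar n}))$. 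Moreover a discrete maximum-principle argument on $R^H\sigma=-\lambda$ (at any putative maximizer $n^*\neq\bar n$, $(R^H\sigma)_{n^*}=\sum_{l\neq n^*}|R^H_{n^*,l}|(\sigma_{n^*}-\sigma_l)\ge 0$ contradicts $(R^H\sigma)_{n^*}=-\lambda_{n^*}<0$) shows that $\sigma_{\bar n}$ is the unique maximum of $(\sigma_n)$, so $\ket{\bar n}\bra{\bar n}$ is the unique global maximum of $W_\epsilon$ on $\DD(\HH)$ and the feedback consistently selects $u=0$ at the target, leaving it closed-loop invariant.
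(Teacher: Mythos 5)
Your proposal is correct and follows essentially the same route as the paper's proof: the same decomposition of the expected increment of $W_\epsilon$ into a measurement-step gain (proportional to $Q$ of~\eqref{eq:Q}) plus a maximization gain, the same appeal to Theorem~\ref{thm:app} to localize the $\omega$-limit set, and the same second-derivative computation via Lemma~\ref{lem:second} and~\eqref{eq:d2tr} showing $u=0$ is a strict local minimum of $u\mapsto W_\epsilon(\UU_u(\ket{n}\bra{n}))$ for $n\neq\bar n$, which rules out all pointer states except $\ket{\bar n}\bra{\bar n}$. Your additional remarks (uniqueness of the maximum of $(\sigma_n)$ at $\bar n$ via a discrete maximum principle, concavity at the target) are sound consistency checks but are not needed for the paper's argument.
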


The proof relies on the fact that $W_{\epsilon}(\rho)$ is  a strict Lyapunov function for the closed-loop system.

\begin{proof} We have
\small
\begin{align*}
&\EE{W_{\epsilon}(\rho_{k+1})|\rho_k}-W_{\epsilon}(\rho_k)=\\
&\qquad \sum_{\mu\in I_{\rho_k}}p_{\mu,\rho_k}\Big(W_{\epsilon}\big(\UU_{K(\MM_\mu(\rho_k))}(\MM_{\mu}(\rho_k))\big)-W_{\epsilon}(\rho_k)\Big)=\\
&\qquad\sum_{\mu\in I_{\rho_k}}p_{\mu,\rho_k}\Big(\underset{u\in[-\bar u,\bar u]}{\text{max}}\Big(W_{\epsilon}\big(\UU_{u}(\MM_{\mu}(\rho_k))\big)\Big)-W_{\epsilon}(\rho_k)\Big)=\\
&\qquad\sum_{\mu\in I_{\rho_k}}p_{\mu,\rho_k}\Big(W_{\epsilon}\big(\MM_{\mu}(\rho_k)\big)-W_{\epsilon}(\rho_k)\Big)+\\
&\qquad\sum_{\mu\in I_{\rho_k}}p_{\mu,\rho_k}\Big(\underset{u\in[-\bar u,\bar u]}{\text{max}}\Big(W_{\epsilon}\big(\UU_{u}(\MM_{\mu}(\rho_k))\big)\Big)-W_{\epsilon}\big(\MM_{\mu}(\rho_k)\big)\Big).
\end{align*}\normalsize
We define the following functions of $\rho_k$,
\small
\begin{equation*}
Q_1(\rho_k):=\sum_{\mu\in I_{\rho_k}}p_{\mu,\rho_k}\Big(W_{\epsilon}\big(\MM_{\mu}(\rho_k)\big)-W_{\epsilon}(\rho_k)\Big),
\end{equation*}
\normalsize and
\small\begin{multline*}
Q_2(\rho_k):=\\
\sum_{\mu\in I_{\rho_k}}p_{\mu,\rho_k}\Big(\underset{u\in[-\bar u,\bar u]}{\text{max}}\Big(W_{\epsilon}\big(\UU_{u}(\MM_{\mu}(\rho_k))\big)\Big)-W_{\epsilon}\big(\MM_{\mu}(\rho_k)\big)\Big).
\end{multline*}
\normalsize
These functions are both positive continuous functions of $\rho_k$ (the continuity of these functions can be proved in the same way as the proof of the continuity of $Q(\rho_k)$ in Theorem~\ref{thm:first}). By Theorem~\ref{thm:app} of the appendix, the $\omega$-limit set $\Omega$ is included in the following set
\begin{equation*}
\{\rho\in\DD(\HH)|~ Q_1(\rho)=0\}~\cap~\{\rho\in\DD(\HH)|~ Q_2(\rho)=0\}.
\end{equation*}
Indeed $Q_1$ coincides with $Q$ defined in~\eqref{eq:Q}. During the proof of Theorem~\ref{thm:first}, we have shown  that $Q_1(\rho)=0$ implies $\rho=\ket{n}\bra{n}$ for some $n\in\{1,\cdots, d\}$ .
But $Q_2(\ket{n}\bra{n})=0$  implies that
$\underset{u\in[-\bar u,\bar u]}{\text{max}}W_{\epsilon}\big(\UU_{u}(\ket{n}\bra{n})\big) = W_{\epsilon}\big(\bra n \ket n\big)$ since $\MM_\mu(\ket n \bra n) =\ket n \bra n$. According to the Lemma~\ref{lem:second} and the relation~\eqref{eq:d2tr}, we have
$ \left.\tfrac{d W_{\epsilon}\big(\UU_{u}(\ket{n}\bra{n})\big)}{du}\right|_{u=0} =0$ and
$$
\left.\tfrac{d^2 W_{\epsilon}\big(\UU_{u}(\ket{n}\bra{n})\big)}{du^2}\right|_{u=0}
= \lambda_n + \epsilon \left( (\bra n H \ket n )^2 - \bra n H^2 \ket n  \right)
.
$$
Since $\lambda_n >0$ for $n\neq \bar n$ and $\epsilon$ is not too large, for any $n\neq \bar n$, $u=0$ is a locally strict minimum of $W_{\epsilon}\big(\UU_{u}(\ket{n}\bra{n})\big)$. Consequently $Q_2(\ket n \bra n)=0$ implies that  $n=\bar n$.
\end{proof}

\section{Closed-loop simulations for the Photon-Box}\label{sec:simul}

 We recall the photon-box model presented in~\cite{dotsenko-et-al:PRA09}: $d=\nmax+1$, where $\nmax$ is the maximum photon number. To be compatible  with usual  quantum-optics notations the ortho-normal basis $\ket{n}$ of the Hilbert space $\HH=\CC^{\nmax+1}$ is indexed by $n\in\{0,\ldots,\nmax\}$ (photon number).   For such system $\mu_k$ takes just two values $g$ or $e,$ and the measurement operators $M_g$ and $M_e$ are defined by $M_g=\cos(\phi_0+\theta \bf N)$ and $M_e=\sin(\phi_0+\theta \bf N)$ ($\phi_0$ and $\theta$ are constant angles fixed as the experiment parameters). When $\theta/\pi$ is irrational,  assumption~\ref{ass:impt} is satisfied.
The photon number operator $\bf N$ is defined by $\bf N=a^\dag a,$ where $\bf a$ is the annihilation operator truncated to $\nmax$ photons. $\bf a$  corresponds to the upper $1$-diagonal matrix filled with $(\sqrt{1},\cdots,\sqrt{\nmax}):$
and $\bf N$ to  the diagonal operator filled with $(0,1, \ldots, \nmax)$.
The truncated creation operator denoted by $\bf a^\dag$ is the Hermitian conjugate of $\bf a.$ Consequently, the Kraus operators $M_g$ and $M_e$ are also diagonal matrices with cosines and sines on the diagonal.

The Hamiltonian $H= \imath (\bf a ^\dag - a)$ yields the unitary operator $U_u=e^{u (\bf a^\dag - a)}$ (also known as displacement operator). Its graph $G^H$ is connected. The associated Laplacian matrix $R^H$  admits a  simple tri-diagonal structure with diagonal elements $R^H_{n,n}= 4 n + 2$,  upper diagonal elements
$R^H_{n-1,n}= -2n$ and under diagonal elements  $R^H_{n+1,n}=-2n-2$ (up to some truncation distortion  for $n=\nmax-1,\nmax$).

For a goal photon number $\bar n\in\{0,\ldots,\nmax-1\}$, we propose the following setting for the $\lambda_n$  and $\epsilon$ defining $W_\epsilon$ of Theorem~\ref{thm:main}:
$$
\forall n\in\{0,\ldots,\nmax\}/\{\bar n\},~\lambda_n=1.
$$
Figure~\ref{fig:Sigma} corresponds to the values of $\sigma_n$ found with  $\lambda_n$ given in above with $\bar n=3$ and $\nmax=10$. We remark that $\sigma_{\bar n}$ is maximal.

Since  $(\bra n H \ket n )^2 - \bra n H^2 \ket n = -2n-1$, the constraint on $\epsilon >0$ imposed by Theorem~\ref{thm:main} reads $\forall n\neq \bar n$, $\epsilon < \frac{1}{2n+1}$, i.e., $\epsilon < \frac{1}{2 \nmax+1}$.

The maximization defining the feedback in Theorem~\ref{thm:main} could be problematic in practice. The unitary propagator $U_u$ does not admit in general a simple analytic form, as it is the case here.  Simulations below  show that we can replace $U_u$ by ist quadratic approximation valid for $u$ small and derived from Baker-Campbell-Hausdorff:
$$
\UU_u(\rho)
= \rho - \imath u [H,\rho] - \tfrac{u^2}{2} [H,[H,\rho]] + O(|u|^3)
.
$$
This yields to an  explicit quadratic approximation of $W_\epsilon(\UU_u(\rho))$ around $0$. The feedback is then given by replacing  $W_\epsilon(\UU_u(\rho))$  by a parabolic expression in $u$  for  the maximization providing the feedback $u_k=K(\rho_{k+\half})$.

For the simulations  below, we take $\nmax=10$, $\bar n=3$ and $\epsilon = \frac{1}{4\nmax + 2}$ and $\bar u=\frac{1}{10}$. The parameters appearing in the Kraus operators are $\theta=\tfrac{\sqrt{2}}{5}$  and $\phi_0=\tfrac{\pi}{4}-\bar n\theta$.
 Figure~\ref{fig:fid} corresponds to $100$ realizations of the closed loop Markov process  with an approximated feedback obtained by the quadratic approximation of $U_u$ versus $u$ sketched here above. Each realization starts with the same initial state $\mathbb U_{\sqrt{\bar n}}(\ket 0 \bra 0)$ (coherent state with an average of  $3$  photon). Despite the quadratic approximation used to derived the feedback, we observe a rapid convergence towards the goal state $\ket{\bar n}\bra{\bar n}$.
 In more realistic simulations not presented  here but  including  the quantum filter to estimate $\rho_k$ from the measurements $\mu_k$ and also the main experimental imperfections described~\cite{dotsenko-et-al:PRA09}, the asymptotic value of the average fidelity is larger than 0.6. This indicates that such feedback laws are robust.

%
%
%
%
%
%
%

 \begin{figure}
  \centerline{\includegraphics[width=0.5\textwidth]{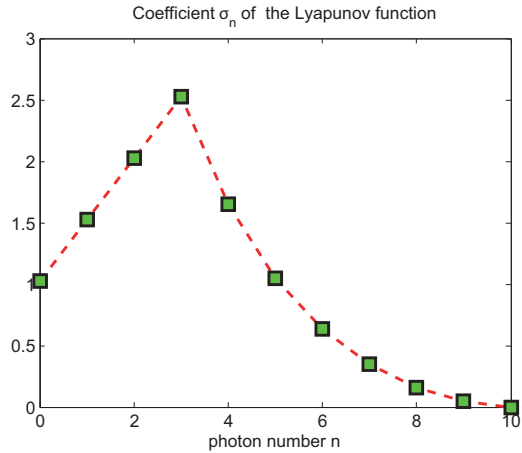}}
   \caption{ The coefficients $\sigma_n$ used for the Lyapunov function
   $W_\epsilon(\rho)$ defined in~Theorem~\ref{thm:main} during  the closed-loop simulations, the goal photon number
   $\bar n=3$ corresponds to the maximum of the $\sigma_n$. }\label{fig:Sigma}
\end{figure}
\begin{figure}
  \centerline{\includegraphics[width=0.5\textwidth]{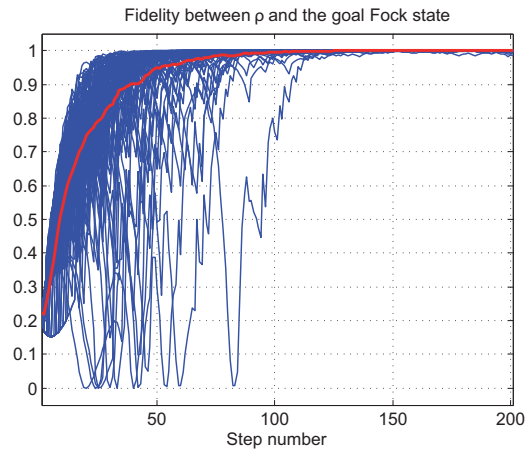}}
   \caption{ $\bra{3}\rho_k\ket{3}$ (Ideal case; fidelity with respect to the $3$-photon goal state) versus the time step $k\in\{0,\cdots,200\}$ for $100$ realizations of the closed-loop Markov process (blue curves) starting from the same coherent state $\rho_0=\UU_{\sqrt{3}}(\ket{0}\bra{0}).$ The ensemble average over these realizations corresponds to the thick red curve.}\label{fig:fid}
\end{figure}

\section{Concluding remarks}

The method proposed here to derive strict control-Lyapunov could certainly be extended to
\begin{itemize}

\item prove exponential closed-loop convergence for the feedback law given by theorem~\ref{thm:main}.

\item  the general situation where the control $u$ appears directly in the  Kraus operators $M_\mu(u)$ instead of being separated from the QND measures and attached to a unitary evolution applied after each measurement.

\item continuous-time quantum systems subject to QND measurements such as those considered in~\cite{mirrahimi-handel:siam07} and described in detail in~\cite{wiseman-milburn:book}.

\item infinite-dimensional quantum systems as in~\cite{somaraju-et-al:cdc2011} that consider  the   photon-box system  without truncation to a finite number of photons.

\end{itemize}


\appendix
\section{Appendix}
\begin{thm}\label{thm:app}
\rm Let $X_k$ be a Markov chain on the compact state space $S.$ Suppose, there exists a non-negative function $V(X)$ satisfying
\begin{equation}\label{eq:most}
\EE{V(X_{k+1})|X_k}-V(X_k)= Q(X_k),
\end{equation}
where  $Q(X)$ is a positif continuous function of $X,$ then the $\omega$-limit set $\Omega$ (in the sense of almost sure convergence) of $X_k$ is included in the following set
\begin{equation*}
I:=\{X|\quad Q(X)=0\}.
\end{equation*}
\end{thm}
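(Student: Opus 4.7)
The plan is to exploit the hypothesis $\EE{V(X_{k+1})\mid X_k} - V(X_k) = Q(X_k)\ge 0$, which says that $V(X_k)$ is a non-negative submartingale along trajectories. Because $V$ is non-negative on the compact state space $S$---and continuous in the intended applications, hence bounded above by some constant $C=\sup_S V<\infty$---Doob's submartingale convergence theorem guarantees that $V(X_k)$ converges almost surely (and in $L^1$) to an integrable limit $V_\infty$.

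Next I would take expectations in the one-step identity and telescope to obtain
\begin{equation*}
\EE{V(X_k)}-\EE{V(X_0)} = \sum_{j=0}^{k-1}\EE{Q(X_j)}.
\end{equation*}
The left-hand side stays bounded by $C$ as $k\to\infty$, so the non-negative series $\sum_{j\ge 0}\EE{Q(X_j)}$ is finite. By monotone convergence, $\EE{\sum_{j\ge 0}Q(X_j)}<\infty$, and therefore $\sum_{j\ge 0}Q(X_j)<\infty$ almost surely. In particular $Q(X_j)\to 0$ almost surely.

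To finish, I would fix a sample path $\omega$ in the full-probability event on which $Q(X_j(\omega))\to 0$. Any $X_\infty$ in the $\omega$-limit set of the trajectory $(X_j(\omega))$ is, by definition, a subsequential limit $X_{j_\ell}(\omega)\to X_\infty$, whose existence is guaranteed by compactness of $S$. Continuity of $Q$ then yields $Q(X_\infty)=\lim_\ell Q(X_{j_\ell}(\omega))=0$, so $X_\infty\in I$. Hence $\Omega\subseteq I$ almost surely, which is the claim.

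The main subtlety is conceptual rather than computational: one must correctly combine Doob's convergence theorem for the $L^1$-bounded submartingale $V(X_k)$ with the summability step that converts $\sum_j\EE{Q(X_j)}<\infty$ into the pointwise statement $Q(X_j)\to 0$ a.s. Once this is in hand, continuity of $Q$ together with compactness of $S$ closes the argument, in the spirit of the Kushner-type stochastic invariance principle referenced as~\cite{kushner-71} in the paper.
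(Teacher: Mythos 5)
Your argument is correct, and its final step (compactness of $S$ gives subsequential limits, continuity of $Q$ forces $Q=0$ on the $\omega$-limit set) is exactly the one the paper uses. The difference is upstream: the paper obtains the key fact $Q(X_k)\to 0$ almost surely by simply invoking Theorem~1 of Kushner (Ch.~8 of~\cite{kushner-71}), whereas you prove it from scratch --- telescoping the one-step identity to get $\sum_j \EE{Q(X_j)} \leq \sup_S V < \infty$, then monotone convergence to get $\sum_j Q(X_j)<\infty$ a.s., hence $Q(X_j)\to 0$ a.s. This makes your write-up self-contained, and it also surfaces an implicit hypothesis that the theorem statement glosses over: the literal statement only assumes $V$ non-negative, but your telescoping (and Kushner's theorem) needs $V$ bounded above on $S$, which you correctly justify by the continuity of $V$ in the paper's applications (it is $W_\epsilon$ or the quadratic $V$ of Theorem~\ref{thm:first}, both continuous on the compact set $\DD(\HH)$). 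Two cosmetic remarks: the appeal to Doob's submartingale convergence theorem is not actually needed (the summability of $\EE{Q(X_j)}$ already gives everything you use), and the claim of $L^1$ convergence of $V(X_k)$, while true by boundedness, plays no role. Neither affects correctness.
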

\medskip
\begin{proof} The proof is just an application of the Theorem $1$ in~\cite[Ch. 8]{kushner-71}, which shows that $Q(X_k)$ converges to zero for almost all paths. It is clear that the continuity of $Q(X)$ with respect to $X$ and the compactness of $S$ implies that the $\omega$-limit set of $X_k$ is necessarily included into the set I.
\end{proof}
\
\end{document}